\theoremstyle{plain}
\newtheorem{thm}{Theorem}[section]
\newtheorem{prop}{Proposition}[section]
\newtheorem{lem}{Lemma}[section]
\def\RR{\mathbb{R}}
\def\SS{\mathbb{S}}
\def\eps{\varepsilon}
\def\inn<#1>{\langle #1 \rangle} 		%内積
\def\dist(#1){\mathrm{dist}^{\Sigma}(#1)} 	%距離
\def\tr{\mathrm{tr}}					%trace
\def\nablaS{\nabla^{\Sigma}} 			%接続
\def\intS{\int_{\Sigma}}				%積分
\def\anF{\mathcal{F}_\gamma}			%非等方的エネルギー
\def\Rext{R_{\mathrm{ext}}^\gamma}	%非等方的外在的半径
\def\DH{d_{\mathcal{H}}}				%Hausdorff距離
\title{Anisotropic extrinsic radius pinching for hypersurfaces and the stability of the Wulff shape}
\author{Toshimi Inoue}
\date{}							% Activate to display a given date or no date
\begin{document}
\maketitle

\begin{abstract}
We prove the Hasanis--Koutroufiotis type inequality for the anisotropic extrinsic radius of hypersurfaces in Euclidean space involving the anisotropic mean curvatures. We also study the equality case and proved that an almost extremal hypersurface must be close to the Wulff shape in the sense of the Hausdorff distance. 
\end{abstract}
%%%%%%%%%%1
\section{Introduction}
Let $X: \Sigma^n \to \RR^{n+1}$ be a closed, isometrically immersed hypersurface. The {\it extrinsic radius} of $\Sigma$ is defined as the smallest radius of balls containing $\Sigma$. It is well-known that the extrinsic radius of $\Sigma$ can be bounded from below in terms of the mean curvature. Namely, T. Hasanis and D. Koutroufiotis \cite{HK} showed that 
\begin{align}
R_\mathrm{ext} \|H\|_\infty \geq 1, \label{eq:isotropicHK}
\end{align}
where $R_\mathrm{ext}$ and $H$ denote the extrinsic radius and the mean curvature of $\Sigma$ respectively. For closed hypersurfaces, this inequality can be proved from an $L^2$-estimate of the radius as follows (see \cite{AGR}): 
\begin{align}
\left( \intS |H|^2 \right)^{\frac{1}{2}}\left( \intS |X-X_0|^2 \right)^{\frac{1}{2}} \geq \mathrm{Vol}(\Sigma), \label{eq:isotropicL2HK}
\end{align}
where $X_0$ is the center of mass of $\Sigma$ which is defined by $X_0=\frac{1}{\mathrm{Vol}(\Sigma)}\intS X$ and $\mathrm{Vol}(\Sigma)$ is the volume of $\Sigma$. 
Moreover, the equality holds in \eqref{eq:isotropicL2HK} (hence in \eqref{eq:isotropicHK}) if and only if $\Sigma$ is the $n$-dimensional sphere of radius $1/\|H\|_\infty$ centered at $X_0 \in \RR^{n+1}$. 

The aim of this paper is to obtain an anisotropic generalization of the extrinsic radius estimate \eqref{eq:isotropicL2HK} and quantitative and qualitative stability results proved in \cite{AGR, CG, R1} for the isotropic setting. 

To state our main results, we need some notations. Let $\gamma : \SS^{n} \to \RR_{>0}$ be a smooth, positive function satisfying the convexity condition
\begin{align}%convex
A_\gamma = (\mathrm{Hess}^{\SS^n}_{\gamma} + \gamma I)_{\nu} > 0, 		\label{eq:convex}
\end{align}
for any $\nu \in \SS^n$. Here, $I$ denotes the identity operator on $T_{\nu}\SS^n$ and $>0$ means the positivity of self-adjoint operators. We consider the map given by

\begin{align}
	\begin{array}{rccl}
	\xi : &\SS^{n}	&\longrightarrow&	\RR^{n+1}\\
		&\nu		&\longmapsto&		\gamma(\nu)\nu+\nabla^{\SS^{n}}\gamma(\nu). 
	\end{array}
\end{align}
The image $W_\gamma = \xi(\SS^n)$ is called the {\it Wulff shape} with respect to $\gamma$. Note that $W_\gamma$ is a convex hypersurface in $\RR^{n+1}$ by the convexity consition \eqref{eq:convex}. 

The Wulff shape $W_\gamma$ can be seen as the "round sphere" for an anisotropic norm on $\RR^{n+1}$. Namely, if we introduce the {\it Minkowski norm} $\gamma^{*}: \RR^{n+1} \to \RR_{ \geq 0}$ by 
\begin{align}
\gamma^{*}(x) = \sup_{|z|\gamma(\frac{z}{|z|}) \leq 1} \inn<x, z>, 	\label{eq:gammadual}
\end{align}
then $W_\gamma$ can be represented as $W_\gamma = \{\gamma^{*} = 1\}$ (see \cite{Sch} for details). For a positive $s > 0$, we call the set $sW_\gamma = \{\gamma^{*} = s\}$ the {\it Wulff shape of radius $s$}.  We now define the {\it anisotropic extrinsic radius} $\Rext$ for a closed, isometrically immersed hypersurface $X: \Sigma^n \to \RR^{n+1}$ by
\begin{align}
\Rext = \inf_{x_0 \in \RR^{n+1}}\max_{p \in \Sigma}\gamma^{*}(X(p)-x_0). \label{eq:defRext}
\end{align}
We note that $\Rext$ is a natural anisotropic generalization of $R_\mathrm{ext}$. Indeed, we have that
\begin{align}
\Rext = \inf\{s > 0 | \Sigma \subset \mathrm{Int}(sW_\gamma) + x_0 \quad\text{for some $x_0 \in \RR^{n+1}$}\}
\end{align} 
Throughout of this paper, we let $X_0 \in \RR^{n+1}$ denote a point at which minimizes the right hand side of \eqref{eq:defRext}. 

The {\it anisotropic shape operator} $S_\gamma$ of $\Sigma$ is defined be $S_\gamma = A_\gamma \circ S$, where $S$ denotes the usual shape operator of $\Sigma$. We define the {\it anisotropic mean curvature} $H_\gamma$ by $H_\gamma=(-1/n)\tr{S_\gamma}$. It is known that the Wulff shape $W_\gamma$ is a stable constant anisotropic mean curvature hypersurface with $H_\gamma = 1$, like a round sphere in the isotropic setting (see for example \cite{K, P}). 

Finally, we define the $L^p$ norm of a function $f$ on $\Sigma$ by
\begin{align}
\|f\|_p = \left(\frac{1}{\anF(\Sigma)}\intS |f|^p\gamma(N) \right)^{\frac{1}{p}}, 
\end{align}
where $\anF(\Sigma) = \intS \gamma(N)$ is the {\it anisotropic surface energy} of $\Sigma$ and $N$ is the unit normal vector field along $\Sigma$. 

Our first result is the following anisotropic version of the extrinsic radius estimate. 
\begin{thm}\label{thm:anisoHK}%aniso-Hasanis--Kautroufiotis
Let $X: \Sigma^n \to \RR^{n+1}$ be a closed, isometrically immersed hypersurface. Let $\gamma : \SS^{n} \to \RR_{>0}$ be a smooth positive function satisfying the convexity condition \eqref{eq:convex}. Then, it follows that 
\begin{align}
\|H_\gamma\|_2 \|\gamma^{*}(X-X_0)\|_2 \geq 1. 	\label{eq:L2HK}
\end{align}
In particular, the anisotropic extrinsic radius of $\Sigma$ satisfies 
\begin{align}
\|H_{\gamma}\|_{\infty} \Rext \geq 1. \label{eq:HKineq}
\end{align}
Moreover, equality occurs in \eqref{eq:L2HK} or \eqref{eq:HKineq} if and only if $\Sigma$ is the Wulff shape with respect to $\gamma$ of radius $\Rext$ up to translations. 
\end{thm}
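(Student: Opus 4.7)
The plan is to follow the isotropic strategy used to prove \eqref{eq:isotropicL2HK}, replacing the Minkowski identity $\intS H\inn<X-X_0, N> d\Sigma = \mathrm{Vol}(\Sigma)$ by an anisotropic analogue and the pairing $|\inn<X-X_0, N>| \leq |X-X_0|$ by the defining inequality of the dual norm $\gamma^*$. The $L^\infty$ bound \eqref{eq:HKineq} will then follow from the $L^2$ bound \eqref{eq:L2HK} via $\|f\|_2 \leq \|f\|_\infty$ together with the estimate $\|\gamma^*(X-X_0)\|_\infty \leq \Rext$, which is immediate from the choice of $X_0$.

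The first step is to establish the anisotropic Minkowski identity
\begin{equation}
\anF(\Sigma) = \intS H_\gamma \inn<X-X_0, N> d\Sigma.
\end{equation}
To prove this, I would apply the divergence theorem to the tangential vector field $Y = \gamma(N)(X-X_0) - \inn<X-X_0, N>\,\xi(N)$ on $\Sigma$, whose normal component vanishes because $\inn<\xi(\nu),\nu> = \gamma(\nu)$. A direct computation using $d\xi_\nu = A_\gamma$ on $T_\nu\SS^n$ and $dN = -S$ gives $\divS(\xi\circ N) = -\tr(A_\gamma\circ S) = nH_\gamma$, and the cross-terms involving $S(\nabla^{\SS^n}\gamma\circ N)$ cancel by self-adjointness of $S$, leaving $\divS Y = n[\gamma(N) - H_\gamma\inn<X-X_0, N>]$. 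Integrating over the closed manifold $\Sigma$ yields the identity.

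The second step combines the pointwise anisotropic inequality $\inn<x, \nu> \leq \gamma^*(x)\gamma(\nu)$, which is immediate from \eqref{eq:gammadual}, with the ordinary Cauchy--Schwarz inequality in $L^2(\Sigma, \gamma(N) d\Sigma)$. The identity above then gives
\begin{equation}
\anF(\Sigma) = \intS H_\gamma \inn<X-X_0, N> d\Sigma \leq \intS |H_\gamma|\gamma^*(X-X_0)\gamma(N) d\Sigma \leq \anF(\Sigma)\|H_\gamma\|_2 \|\gamma^*(X-X_0)\|_2,
\end{equation}
which is exactly \eqref{eq:L2HK}.

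The main obstacle is the equality case: the pointwise consequences of equality must be upgraded to the global rigidity of $\Sigma$ as a Wulff shape. Equality in \eqref{eq:L2HK} forces both pointwise equality in the anisotropic inequality and $L^2$-proportionality $|H_\gamma| = c\,\gamma^*(X-X_0)$ for some constant $c > 0$. By the strict convexity of $W_\gamma$ guaranteed by \eqref{eq:convex}, equality $\inn<x, \nu> = \gamma^*(x)\gamma(\nu)$ with $x \neq 0$ forces $x$ to be a positive multiple of $\xi(\nu)$, so $X-X_0 = \lambda\,\xi(N)$ with $\lambda = \gamma^*(X-X_0) \geq 0$. Differentiating this identity along any $v \in T_p\Sigma$ yields $v = d\lambda(v)\,\xi(N) - \lambda S_\gamma(v)$; taking the inner product with $N$ and using $\gamma(N) > 0$ shows $d\lambda = 0$, so $\lambda$ is constant. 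The residual tangential equation $v = -\lambda S_\gamma(v)$ then forces $S_\gamma \equiv -(1/\lambda)I$, so $H_\gamma \equiv 1/\lambda$ and $X(\Sigma) = X_0 + \lambda W_\gamma$; $\lambda = \Rext$ since $\gamma^*(X-X_0) \equiv \lambda$ everywhere. The equality case in \eqref{eq:HKineq} reduces to that in \eqref{eq:L2HK}, since $\|H_\gamma\|_\infty\Rext = 1$ forces all the inequalities $\|H_\gamma\|_2 \leq \|H_\gamma\|_\infty$, $\|\gamma^*(X-X_0)\|_2 \leq \Rext$, and $\|H_\gamma\|_2\|\gamma^*(X-X_0)\|_2 \geq 1$ to be equalities.
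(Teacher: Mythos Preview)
Your proof is correct and follows essentially the same strategy as the paper: the anisotropic Hsiung--Minkowski identity together with the Fenchel inequality and Cauchy--Schwarz gives \eqref{eq:L2HK}, and equality in Fenchel forces the Gauss map of $\Sigma$ to align with the Wulff normal. The only cosmetic differences are that you rederive the Minkowski identity (the paper simply cites \cite{HL}) and that for rigidity you differentiate $X-X_0=\lambda\,\xi(N)$ directly to get $d\lambda=0$, whereas the paper uses the gradient formula \eqref{eq:gradgamma} for $\gamma^*$ to reach the same conclusion that $\gamma^*(X-X_0)$ is constant.
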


A natural question related to the equality case in \eqref{eq:L2HK} is the following: If the equality almost holds in \eqref{eq:L2HK} (or \eqref{eq:HKineq}), is $\Sigma$ close to a rescaled Wulff shape $\|H_\gamma\|_2^{-1}W_\gamma$ in a certain sense? More precisely, we consider the following pinching condition for $p>2$ and $\eps>0$:
\begin{align}%moment pinching condition
\|H_\gamma\|_p \|\gamma^{*}(X-X_0)\|_2 \leq 1+\eps. 	\tag{$P_{p, \eps}$}\label{eq:pinching}
\end{align}

In recent years, many authors study generalizations of classical pinching results for geometric invariants of hypersurfaces to the anisotropic case. In \cite{dRG}, De Rosa and Gioffr\`{e} studied the nisotropic almost totally umbilical hypersurfaces and proved the stability of the Wulff shapes. More precisely, they proved that if the $L^p$ norm of the trace-free part of the anisotropic second fundamental form of a hypersurface $\Sigma$ is sufficiently small, then $\Sigma$ must be close to the Wulff shape in the Sobolev $W^{2, p}$ sense. Roth \cite{R2} used their results to prove that a convex hypersurface with almost constant anisotropic mean curvatures of the first and the second order must be close to the Wulff shape. Recently, Scheuer and Zhang \cite{SZ} studied the quantitative stability of Wulff shape for the anisotropic Heintze--Karcher inequality and the anisotropic Alexandrov theorem. 

Our next result shows that, when $\|S_{\gamma}\|_q$ is bounded for some $q>n$, the pinching condition \eqref{eq:pinching} implies that $\Sigma$ is close to $W_\gamma$ with respect to the Hausdorff distance. 
\begin{thm}\label{thm:mainthm}%pinching thm for radius
Let $X: \Sigma^n \to \RR^{n+1}$ be a closed, isometrically immersed hypersurface. Let $\gamma : \SS^{n} \to \RR_{>0}$ be a smooth positive function satisfying the convexity condition. Let $q>n$, $p>2$, and $A>0$ be some real constants. Assume that the anisotropic shape operator satisfies $\anF(\Sigma)^{1/n}\|S_\gamma\|_q \leq A$. Then there exists some positive constants $C = C(n, p, q, A, \gamma)$ and $\alpha = \alpha(n, q)$ such that if $\Sigma$ satisfies \eqref{eq:pinching}, then we have
\begin{align}
\left\|\gamma^{*}(X-X_0)-\frac{1}{\|H_{\gamma}\|_2} \right\|_{\infty} \leq C\eps^{\alpha}\frac{1}{\|H_{\gamma}\|_2},		\label{eq:radpinch}
\end{align}
and for any $r \in [1, p)$ there exists some positive $D=D(n, p, q, r, A, \gamma)$ such that 
\begin{align}
\|H_\gamma-\|H_\gamma\|_2\|_r \leq D \eps^{\frac{\alpha(p-r)}{r(p-1)}}\|H_\gamma\|_2. 	\label{eq:mcpinch}
\end{align}

Moreover, given $\eps_0 > 0$, there exist a positive $\eps = \eps(n, p, q,  A, \gamma, \|H_\gamma\|_{\infty}, \eps_0)$ such that the pinching condition \eqref{eq:pinching} implies $\DH(\Sigma, \|H_\gamma\|_2^{-1}W_{\gamma}) < \eps_0$, where $\DH$ denotes the Hausdorff distance. 
\end{thm}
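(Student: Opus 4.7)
The plan is to adapt the isotropic strategy of \cite{AGR, CG, R1} to the anisotropic setting, in four stages: first extract from \eqref{eq:pinching} an $L^2$ estimate for $u := \gamma^{*}(X - X_0) - \|H_\gamma\|_2^{-1}$ by revisiting the proof of \eqref{eq:L2HK}; then upgrade this $L^2$ estimate to $L^\infty$ by a Nash--Moser iteration powered by the hypothesis $\anF(\Sigma)^{1/n}\|S_\gamma\|_q \leq A$ with $q > n$; then deduce \eqref{eq:mcpinch} by reinserting the $L^\infty$ bound and interpolating; and finally conclude Hausdorff closeness by combining \eqref{eq:radpinch} with a topological argument for the reverse inclusion.

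For the first stage, the proof of \eqref{eq:L2HK} should proceed via Cauchy--Schwarz applied to an anisotropic Minkowski-type identity relating $\anF(\Sigma) = \intS \gamma(N)$ to an $L^2$ pairing of $H_\gamma$ against $X - X_0$, together with the pointwise dual-norm bound of the type $\inn<X - X_0, \cdot> \leq \gamma^{*}(X - X_0)\gamma(N)$. Under \eqref{eq:pinching}, Hölder gives $\|H_\gamma\|_2 \leq \|H_\gamma\|_p$, so the whole chain is $(1+\eps)$-sharp, and quantifying the Cauchy--Schwarz defect in the standard way yields $\|u\|_2 \leq C\sqrt{\eps}\,\|H_\gamma\|_2^{-1}$.

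For the second stage, the function $\gamma^{*}$ is $C^2$ and strictly convex away from the origin by \eqref{eq:convex}, so a direct computation (using the anisotropic analogue $\divS(A_\gamma \nablaS X) = -nH_\gamma N$ of $\lapS X = -nHN$) shows that $u$ satisfies, on the smooth part of $\Sigma$, a differential inequality of the form $\divS(A_\gamma \nablaS u) \geq -a - b|u|$ with coefficients $a, b$ pointwise controlled by $|S_\gamma|$ and $H_\gamma$. Combining this with the anisotropic Michael--Simon--Sobolev inequality and running a Nash--Moser iteration (parallel to the isotropic computations of \cite{R1, CG}) yields $\|u\|_\infty \leq C\|u\|_2^{\beta}\|H_\gamma\|_2^{-(1-\beta)}$ for some $\beta = \beta(n, q) \in (0, 1)$, which combined with stage one gives \eqref{eq:radpinch}, with $\alpha = \beta/2$. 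For the third stage, plugging \eqref{eq:radpinch} back into the Cauchy--Schwarz defect produces an $L^2$ bound $\bigl\| H_\gamma - \|H_\gamma\|_2 \bigr\|_2 \leq C\eps^{\alpha'}\|H_\gamma\|_2$ for some $\alpha' > 0$, and Hölder interpolation with the $L^p$ control from \eqref{eq:pinching} yields \eqref{eq:mcpinch} with the claimed exponent $\alpha(p - r)/(r(p - 1))$.

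Finally, for the Hausdorff statement, the inclusion of $\Sigma$ into a thin Euclidean shell around $\|H_\gamma\|_2^{-1}W_\gamma + X_0$ is immediate from \eqref{eq:radpinch} and $\min_{\SS^n}\gamma > 0$, giving one direction. For the reverse inclusion, given $y$ on that target Wulff shape I would consider the $\gamma^{*}$-ray from $X_0$ through $y$: closedness of $\Sigma$ together with an upper bound on the enclosed volume coming from $\|H_\gamma\|_\infty$ forces this ray to meet $\Sigma$ at some $p$ with $\gamma^{*}(p - X_0) \approx \|H_\gamma\|_2^{-1}$, so $|p - y|$ is small. The main obstacle I anticipate is the Moser iteration of stage two: one must carefully handle the operator $A_\gamma$ appearing in both the anisotropic Laplacian and in $\mathrm{Hess}\,\gamma^{*}$, and verify that the Sobolev constants depend only on $n$, $A$, $q$, and the convexity bounds of $\gamma$; the dependence of the final Hausdorff threshold on $\|H_\gamma\|_\infty$ enters precisely through the reverse-inclusion topological argument of stage four.
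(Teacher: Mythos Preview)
Your stages one and three are essentially the paper's approach. The substantive divergences are in stages two and four.

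For stage two, the paper does \emph{not} derive a divergence-form elliptic inequality for $u$. It uses only the pointwise gradient bound $|\nablaS \gamma^{*}(X-X_0)| \leq (\min_{\SS^n}\gamma)^{-1}$, which follows directly from $\nabla\gamma^{*}(x) = \nu/\gamma(\nu)$. Setting $\varphi = \bigl|\gamma^{*}(X) - \|\gamma^{*}(X)\|_2\bigr|$, the paper plugs $f = \varphi^{2a}$ straight into the anisotropic Michael--Simon inequality $\|f\|_{n/(n-1)} \leq C\anF(\Sigma)^{1/n}(\|\nablaS f\|_1 + \||S_\gamma|f\|_1)$ and iterates on $a$; no second-order information about $\gamma^{*}$ or about $\lapS u$ is ever invoked. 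This is strictly simpler than what you propose and sidesteps exactly the obstacle you flag (simultaneously managing $A_\gamma$ in the operator and $\mathrm{Hess}\,\gamma^{*}$ in the nonlinearity). Your PDE route may ultimately close, but you would have to actually produce the inequality, and the coefficients will involve $\mathrm{Hess}\,\gamma^{*}$ composed with the full shape operator, not merely $|S_\gamma|$ and $H_\gamma$.

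Stage four, however, contains a genuine gap. Your ray argument needs the ray from $X_0$ through $y$ to meet $\Sigma$, but $\Sigma$ is only assumed \emph{immersed}: there is no enclosed region, hence no ``enclosed volume'' to bound via $\|H_\gamma\|_\infty$, and no Jordan--Brouwer separation to force an intersection. Even in the embedded case you would still need $X_0$ to lie in the bounded component, which is not given. The paper's argument is of a completely different nature: it proves an anisotropic analogue of the Colbois--Grosjean touching lemma by building a one-parameter family of tube-type comparison hypersurfaces $S_{\eta,t}$ around the slices $\|H_\gamma\|_2^{-1}W_\gamma \cap \Pi_t(\nu_0)$, and computes that their anisotropic mean curvature satisfies $H_\gamma^{S_{\eta,t}} \geq \lambda/(2n\eta)$. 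If $\Sigma$ sits in a thin Wulff annulus yet avoids a ball $B_{\rho}(y)$, one of these tubes must touch $\Sigma$, forcing $|H_\gamma| \geq \lambda/(2n\eta)$ at the contact point; taking $\eta$ small relative to $\|H_\gamma\|_\infty$ yields a contradiction. This is precisely where and how $\|H_\gamma\|_\infty$ enters---as a curvature obstruction at a touching point, not through any volume estimate.
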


This paper is organized as follows. In Section 2, we give some necessary background on anisotropic geometry and  prove Theorem \ref{thm:anisoHK}. In the following sections, we consider the hypersurfaces satisfying the condition \eqref{eq:pinching}. We prove the inequalities \eqref{eq:radpinch} and \eqref{eq:mcpinch} in Section 3. In Section 4, we give a proof of the Hausdorff closeness of $\Sigma$ to the  rescaled Wulff shape. 

%%%%%%%%%%2
\section{Preliminaries}
Let $X: \Sigma^n \to \RR^{n+1}$ be a closed, isometrically immersed hypersurface and let $N$ be the unit normal vector field along $\Sigma$. Let $\inn<\cdot, \cdot>$ and $\nabla$ denote the canonical Riemannian metric and the connection on $\RR^{n+1}$ respectively. Let $\nablaS$ be the connection on $\Sigma$ with respect the induced Riemannian metric from $\inn<\cdot, \cdot>$. The shape operator $S$ of $\Sigma$ is a $(1, 1)$-tensor on $\Sigma$ defined by $Sv = \nabla_v N$. 

Let $\gamma: \SS^n \to \RR_{>0}$ be a smooth positive function on $\SS^n$ satisfying the convexity condition \eqref{eq:convex}. We define the {\it anisotropic shape operator} $S_\gamma$ by $S_\gamma = A_\gamma \circ S$. The {\it anisotropic mean curvature} $H_\gamma$ is given by $H_\gamma = -(1/n)\tr{S_\gamma}$. 

In \cite{HL}, He and Li proved that the anisotropic mean curvature satisfies the Hsiung--Minkowski type formula \cite{Hs} given by 
\begin{align}
\intS (\gamma(N) + H_\gamma \inn<X, N>) = 0. \label{eq:HMformula}
\end{align}
Such an integral formula plays an important role in the rigidity results involving anisotropic mean curvatures (see \cite{HL, K, P} for example). 

Let us now consider the Wulff shape $W_\gamma$ for $\gamma$. Let $\gamma^{*}$ be the dual of $\gamma$ defined by \eqref{eq:gammadual}. We extend $\gamma$ $1$-homogeneously to $\RR^{n+1}$ by letting 
\begin{align}
\gamma(x) = |x|\gamma \left( \frac{x}{|x|} \right)
\end{align}
As an immediate consequence of the definition of $\gamma^{*}$, we have the Fenchel inequality given by
\begin{align}
\inn<x, y> \leq \gamma^{*}(x)\gamma(y)	\label{eq:Fenchel}
\end{align}
for $x, y \in \RR^{n+1}$. Moreover, since $\gamma$ is the supporting function of $W_\gamma$, the equality holds in \eqref{eq:Fenchel} if and only if $x$ is perpendicular to the tangent plane of $W_\gamma$ at $\frac{y}{\gamma^{*}(y)} \in W_\gamma$. We now fix a point $x$ and let $\nu$ be the unit normal vector to $W_\gamma$ at $\frac{x}{\gamma^{*}(x)}$. Differentiating the function $G(x) = \gamma(\nu)\gamma^{*}(x)-\inn<\nu, x>$ as in \cite{N}, we can obtain the gradient of $\gamma^{*}$ as
\begin{align}
\nabla \gamma^{*} (x)= \frac{\nu}{\gamma(x)}. 	\label{eq:gradgamma}
\end{align}

\begin{proof}[Proof of Theorem \ref{thm:anisoHK}]
By the Hsiung--Minkowski formula \eqref{eq:HMformula} and the Fenchel inequality \eqref{eq:Fenchel}, we have
\begin{align}
1 = \left| \frac{1}{\anF(\Sigma)} \intS H_\gamma \inn<X-X_0, N> \right| \leq \frac{1}{\anF(\Sigma)} \intS |H_\gamma| \gamma^{*}(X-X_0) \gamma(N) \leq \|H_\gamma\|_2 \|\gamma^{*}(X-X_0)\|_2, 
\end{align}
which concludes the inequality. 

Assume the equality holds. Set $X_{\gamma^{*}}=\frac{X-X_0}{\gamma^{*}(X-X_0)}$. We have the equality of the Fenchel inequality \eqref{eq:Fenchel}, which implies that the unit normal $N$ is perpendicular to the tangent space of $W_\gamma$ at $X_{\gamma^{*}}$. Moreover, by \eqref{eq:gradgamma}, we have $\nablaS \gamma^{*}(X-X_0) = 0$, which implies that $\gamma^{*}(X-X_0)$ is constant. Therefore, we have $\Sigma = H_\gamma^{-1}W_\gamma +X_0$. 
\end{proof}

%%%%%%%%%%3
\section{Proof of \eqref{eq:radpinch} and \eqref{eq:mcpinch}}
For hypersurfaces, we have the following Michael--Simon Sobolev inequality \cite{MS}: 

\begin{align}
\left( \intS |f|^{\frac{n}{n-1}}\right)^{\frac{n-1}{n}} \leq C(n)\left( \intS |\nablaS f| + \intS |Hf| \right). \label{eq:Sobolev}
\end{align}

Let $\{e_i\}$ be an orthonormal frame along $\Sigma$ which diagonalizes $A_\gamma$ and set $a_i = \inn<A_\gamma e_i, e_i>$. By the Cauchy--Schwartz inequality, we have

\begin{align}
H&=\frac{1}{n}\sum_i \inn<Se_i, e_i>=\frac{1}{n}\sum_i \inn<A_\gamma^{-1}S_\gamma e_i, e_i> = \frac{1}{n}\sum_i a_i^{-1} \inn<S_\gamma e_i, e_i> \\
&\leq \frac{1}{n}\left(\sum_i a_i^{-2}\right)^{\frac{1}{2}}\left(\sum_i |\inn<S_\gamma e_i, e_i>|^2\right)^{\frac{1}{2}} \\
&\leq \frac{1}{\lambda}|S_\gamma|, 
\end{align}
where $\lambda$ is a positive constant given by 
\begin{align}
\lambda = \min_{\nu \in \SS^n, u \in \nu^{\perp}, |u|=1 }\inn<A_\gamma(\nu) u, u>. 
\end{align}
Combining this with \eqref{eq:Sobolev}, we have
\begin{align}
\|f\|_{\frac{n}{n-1}} \leq C(n, \gamma) \anF(\Sigma)^{\frac{1}{n}} (\|\nablaS f\|_1 + \||S_\gamma| f\|_1) \label{eq:anSobolev}. 
\end{align}

To obtain the inequality \eqref{eq:radpinch}, we prove the following estimate, which is an anisotropic version of \cite[Theorem 1.6]{AGR}. 

\begin{prop}\label{prop:Linftyradestimate}
Let $q > n$ be a real. There exists a constant $C = C(n, q, \gamma)> 0$ such that for any isometrically immersed hypersurface $X: \Sigma \to \RR^{n+1}$, we have 

\begin{align}
\|\gamma^{*}(X-X_0)-\|\gamma^{*}(X-X_0)\|_2\|_{\infty} \leq C(\anF(\Sigma)^{\frac{1}{n}}\|S_\gamma\|_2)^{\beta} \|\gamma^{*}(X-X_0)\|_2 \left(1-\frac{\|\gamma^{*}(X-X_0)\|_1}{\|\gamma^{*}(X-X_0)\|_2} \right)^{\frac{1}{2(1+\beta)}}, 
\end{align}
where $\beta = \frac{nq}{2(q-n)}$. 
\end{prop}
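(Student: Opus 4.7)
The strategy is to adapt the proof of \cite[Theorem 1.6]{AGR} to the anisotropic setting by combining the anisotropic Sobolev inequality \eqref{eq:anSobolev} with a Moser iteration on the level sets of the function $u := \gamma^{*}(X-X_0)$.

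The first preliminary is an initial $L^2$ control. Setting $c := \|u\|_2$, a direct expansion gives
\begin{equation*}
\|u-c\|_2^2 = \|u\|_2^2 - 2c\|u\|_1 + c^2 = 2c^2\Bigl(1-\tfrac{\|u\|_1}{\|u\|_2}\Bigr),
\end{equation*}
so the pinching factor governs the $L^2$ deviation of $u$ from $c$. The second preliminary is a pointwise gradient bound: the formula \eqref{eq:gradgamma}, together with the smoothness and positivity of $\gamma$ on the compact $\SS^n$ and the fact that $\nablaS u$ is the projection of $\nabla\gamma^{*}(X-X_0)$ onto $T\Sigma$, yields $|\nablaS u| \leq C(\gamma)$ pointwise on $\Sigma$.

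With these ingredients in hand I run a De Giorgi--Moser iteration on the truncations $v_+ := (u-c)_+$ and $v_- := (c-u)_+$. Inserting $f = v_{\pm}^{s/2}$ into \eqref{eq:anSobolev}, using the gradient bound above for the $|\nablaS f|$ term, and applying H\"older with an exponent $q > n$ to absorb the $|S_\gamma|$-term produces a reverse H\"older inequality relating the $L^{sn/(n-1)}$ and $L^s$ norms of $v_{\pm}$. Iterating along the geometric sequence $s_k = 2(n/(n-1))^k$ and telescoping the resulting product yields a schematic estimate of the form
\begin{equation*}
\|v_{\pm}\|_\infty \leq C\bigl(\anF(\Sigma)^{1/n}\|S_\gamma\|_2\bigr)^{\beta}\|v_{\pm}\|_2^{1/(1+\beta)}\|u\|_\infty^{\beta/(1+\beta)},
\end{equation*}
with $\beta = nq/(2(q-n))$ emerging from the convergence of the geometric series of interpolation weights. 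Bootstrapping via $\|u\|_\infty \leq c + \|v_+\|_\infty$ and inserting the preliminary $L^2$-control then yields the stated bound.

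The main technical obstacle lies in the precise bookkeeping of the Moser iteration: tracking how the Sobolev constant, the gradient bound, and the H\"older weight involving $\|S_\gamma\|_q$ compound through the geometric series of exponents so as to extract exactly $\beta = nq/(2(q-n))$ and the pinching exponent $1/(2(1+\beta))$. A minor additional point is the lower side $v_-$: since $c$ is the $L^2$-norm rather than the integral mean of $u$, there is no mean-zero property to exploit, but the Sobolev inequality applies equally to $(c-u)_+$ and the preliminary $L^2$ bound serves as the starting datum for both sides.
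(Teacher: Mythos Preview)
Your overall strategy---Moser iteration on $\varphi=|u-c|$ (or its one-sided truncations) via the anisotropic Sobolev inequality \eqref{eq:anSobolev}, the gradient bound from \eqref{eq:gradgamma}, and a H\"older step against $\|S_\gamma\|_q$---is exactly the paper's approach, and the $L^2$ identity $\|\varphi\|_2^2=2c^2(1-\|u\|_1/\|u\|_2)$ is used in the same way. Two points, however, do not go through as you have written them.

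First, a minor correction: the iteration ratio is not $n/(n-1)$. After applying H\"older with exponent $q$ to the $|S_\gamma|$-term, the input norm on the right becomes $\|\varphi\|_{2aq/(q-1)}$ rather than $\|\varphi\|_{2a}$, so each step gains only the factor $\nu=\tfrac{n(q-1)}{(n-1)q}$; this is what makes the exponent come out as $\beta=\tfrac{nq}{2(q-n)}$.

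Second, and this is the genuine gap, the iteration does not produce the schematic you wrote. Because the gradient term carries one fewer power of $\varphi$ than the $|S_\gamma|$-term, matching them forces you to pull out a factor $\|\varphi\|_\infty$ from the latter; the outcome of the telescoping is therefore the \emph{implicit} inequality
\[
\|\varphi\|_\infty \leq C\Bigl(\anF(\Sigma)^{1/n}\bigl(\tfrac{1}{\|\varphi\|_\infty}+\|S_\gamma\|_q\bigr)\Bigr)^{\beta}\|\varphi\|_2,
\]
not a bound with $\|u\|_\infty^{\beta/(1+\beta)}$ on the right. The paper closes this by a dichotomy on whether $\|\varphi\|_\infty$ exceeds $\|S_\gamma\|_q^{-\beta/(1+\beta)}\|\varphi\|_2^{1/(1+\beta)}$, and---crucially---by invoking the lower bound $\|S_\gamma\|_q\,\|u\|_2\geq\|H_\gamma\|_2\,\|u\|_2\geq 1$ from Theorem~\ref{thm:anisoHK} to replace the stray $\|S_\gamma\|_q^{-1/(1+\beta)}$ by $\|u\|_2^{1/(1+\beta)}$. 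This is what yields the clean factor $\|u\|_2$ in the final estimate. Your proposed bootstrap via $\|u\|_\infty\leq c+\|v_+\|_\infty$ does not close to the stated form: resolving that implicit inequality by Young or case analysis produces an extra term carrying $(\anF(\Sigma)^{1/n}\|S_\gamma\|_q)^{\beta(1+\beta)}$, which has the wrong power and cannot be absorbed without the same anisotropic Hasanis--Koutroufiotis lower bound that you have not invoked.
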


\begin{proof}
Up to translation, we may assume that $X_0=0$. We set $\varphi = |\gamma^{*}(X)-\|\gamma^{*}(X)\|_2|$. For a positive $a>0$, we have $|\nablaS \varphi^{2a}| \leq 2(\min{\gamma})^{-1}\varphi^{2a-1}$ by \eqref{eq:gradgamma}. Letting $f=\varphi^{2a}$ in \eqref{eq:anSobolev}, we have

\begin{align}
\|\varphi\|_{\frac{2an}{n-1}}^{2a} 
&\leq C(n, \gamma) \anF(\Sigma) (2a\|\varphi\|_{2a-1}^{2a-1} + \||S_\gamma| \varphi^{2a}\|_1) \\
&\leq C(n, \gamma) \anF(\Sigma) (2a\|\varphi\|_{2a-1}^{2a-1} + \|S_\gamma\|_q \|\varphi\|_{\frac{2aq}{q-1}}^{2a}) \\
&\leq C(n, \gamma) \anF(\Sigma) (2a\|\varphi\|_{\frac{(2a-1)q}{q-1}}^{2a-1} + \|S_\gamma\|_q \|\varphi\|_{\frac{2aq}{q-1}}^{2a}) \\
&\leq C(n, \gamma) \anF(\Sigma) (2a + \|S_\gamma\|_q \|\varphi\|_{\infty})\|\varphi\|_{\frac{(2a-1)q}{q-1}}^{2a-1}. \label{eq:iterr}
\end{align}

We set $\nu = \frac{n(q-1)}{(n-1)q}$ and $a = a_k\frac{q-1}{2q}+\frac{1}{2}$ where $a_{k+1} = \nu a_k + \frac{n}{n-1}$ and $a_0 = \frac{2q}{q-1}$. Plugging them into \eqref{eq:iterr} gives 

\begin{align}
\left(\frac{\|\varphi\|_{a_{k+1}}}{\|\varphi\|_\infty}\right)^{\frac{a_{k+1}}{\nu^{k+1}}} \leq \left\{ C(n, \gamma) \anF(\Sigma)^{\frac{1}{n}}\left( \frac{a_k\frac{q-1}{q}+1}{\|\varphi\|_\infty} + \|S_\gamma\|_q\right) \right\}^{\frac{n}{\nu^{p+1}(n-1)}} \left(\frac{\|\varphi\|_{a_{k}}}{\|\varphi\|_\infty}\right)^{\frac{a_{k}}{\nu^{k}}}
\end{align}

Since $q>n$ then $\nu$ and $\frac{a_k}{\nu^k}$ converges to $a_0 + \frac{qn}{q+n}$ and we have 

\begin{align}
1 &\leq  \left( \frac{\|\varphi\|_{a_0}}{\|\varphi\|_\infty} \right)^2 \prod_{k=0}^{\infty} \left\{ 2C(n, \gamma) \anF(\Sigma)^{\frac{1}{n}} a_k \left(\frac{1}{\|\varphi\|_\infty} + \|S_\gamma\|_q\right) \right\}^{\frac{1}{\nu^k}} \\
&\leq \left( \frac{\|\varphi\|_{a_0}}{\|\varphi\|_\infty}\right)^2 \prod_{k=0}^{\infty}a_k^{\frac{1}{\nu^k}} \left\{ 2C(n, \gamma) \anF(\Sigma)^{\frac{1}{n}} \left(\frac{1}{\|\varphi\|_\infty} + \|S_\gamma\|_q\right) \right\}^{\frac{\nu}{\nu-1}} \\
&= C(q, n, \gamma) \left( \frac{\|\varphi\|_{a_0}}{\|\varphi\|_\infty}\right)^2 \left\{\anF(\Sigma)^{\frac{1}{n}} \left(\frac{1}{\|\varphi\|_\infty} + \|S_\gamma\|_q\right) \right\}^{\frac{n(q-1)}{q-n}} \\
&\leq C(q, n, \gamma) \left( \frac{\|\varphi\|_{a_0}}{\|\varphi\|_\infty}\right)^{\frac{2(q-1)}{q}} \left\{\anF(\Sigma)^{\frac{1}{n}} \left(\frac{1}{\|\varphi\|_\infty} + \|S_\gamma\|_q\right) \right\}^{\frac{n(q-1)}{q-n}},  
\end{align}
hence we have 
\begin{align}
\|\varphi\|_\infty \leq C(n, q, \gamma) \left\{\anF(\Sigma)^{\frac{1}{n}} \left(\frac{1}{\|\varphi\|_\infty} + \|S_\gamma\|_q\right) \right\}^{\frac{nq}{2(q-n)}} \|\varphi\|_2. 
\end{align}
We set $\beta = \frac{nq}{2(q-n)}$. If $\|\varphi\|_\infty \geq \|S_\gamma\|_q^{-\frac{\beta}{1+\beta}}\|\varphi\|_2^{\frac{1}{1+\beta}}$, then

\begin{align}
\|\varphi\|_\infty &\leq C(q, n, \gamma) \left\{ \anF(\Sigma)^{\frac{1}{n}}\left( \frac{1}{\|\varphi\|_\infty} + \|S_\gamma\|_q\right)\right\}^\beta \|\varphi\|_2 \\
&\leq C(q, n, \gamma) (\anF(\Sigma)^{\frac{1}{n}}( \|S_\gamma\|_q^{\frac{\beta}{1+\beta}}\|\varphi\|_2^{-\frac{1}{1+\beta}} + \|S_\gamma\|_q))^\beta \|\varphi\|_2 \\
&\leq C(q, n, \gamma) (\anF(\Sigma)^{\frac{1}{n}}\|S_\gamma\|_q)^\beta ( \|S_\gamma\|_q^{-\frac{1}{1+\beta}} + \|\varphi\|_2^{\frac{1}{1+\beta}})^\beta \|\varphi\|_2^{\frac{1}{1+\beta}} \\
&\leq C(q, n, \gamma) (\anF(\Sigma)^{\frac{1}{n}}\|S_\gamma\|_q)^\beta (\|\gamma^{*}(X)\|_2^{\frac{1}{1+\beta}} + \|\varphi\|_2^{\frac{1}{1+\beta}})^\beta \|\varphi\|_2 \\
&\leq C(q, n, \gamma) (\anF(\Sigma)^{\frac{1}{n}}\|S_\gamma\|_q)^\beta \|\gamma^{*}(X)\|_2^{\frac{\beta}{1+\beta}} \|\varphi\|_2, 
\end{align}
where we have used $1\leq \|H_\gamma\|_2\|\gamma^{*}(X)\|_2 \leq \|S_\gamma\|_q \|\gamma^{*}(X)\|_2$. Since we have 
\begin{align}
\|\varphi\|_2^2 = \|\gamma^{*}(X)-\|\gamma^{*}(X)|_2\|_2^2 = 2\|\gamma^{*}(X)\|_2 \left(1-\frac{\|\gamma^{*}(X)\|_1}{\|\gamma^{*}(X)\|_2} \right), 
\end{align}
the desired inequality follows. 

If $\|\varphi\|_\infty \leq \|S_\gamma\|_q^{-\frac{\beta}{1+\beta}}\|\varphi\|_2^{\frac{1}{1+\beta}}$, the result follows immediately from the above expression of $\|\varphi\|_2$ and the fact that $\|S_\gamma\|_q\|\gamma^{*}(X)\|_2 \geq 1$. 
\end{proof}

\begin{proof}[Proof of \eqref{eq:radpinch}]
We may assume that $X_0=0$. From the Hsiung--Minkowski formula \eqref{eq:HMformula}, it follows that $1 \leq \|H_\gamma\|_p\|\gamma^{*}(X)\|_{\frac{p}{p-1}}$. By the H\"older inequality and the pinching condition \eqref{eq:pinching}, we have
\begin{align}
\|H_\gamma\|_p\|\gamma^{*}(X)\|_2 \leq 1+\eps \leq (1+\eps)\|H_\gamma\|_p\|\gamma^{*}(X)\|_{\frac{p}{p-1}} \leq (1+\eps) \|H_\gamma\|_p \|\gamma^{*}(X)\|_1^{1-\frac{2}{p}} \|\gamma^{*}\|_2^{\frac{2}{p}}, 
\end{align}
hence
\begin{align}
1-\frac{\|\gamma^{*}(X)\|_1}{\|\gamma^{*}(X)\|_2} \leq 1-\frac{1}{(1+\eps)^{\frac{p}{p-2}}} \leq \frac{p}{p-2}2^{\frac{2}{p-2}} \eps. 
\end{align}
Combining this inequality with Proposition \ref{prop:Linftyradestimate}, we obtain
\begin{align}
\|\gamma^{*}(X)-\|\gamma^{*}(X)\|_2\|_\infty &\leq C(n, p, q, \gamma)(\anF(\Sigma)\|S_\gamma\|_q)^\beta \|\gamma^{*}(X)\|_2 \eps^{\frac{1}{2(1+\beta)}} \\
&\leq C(n, p, q, \gamma)A^\beta \frac{1}{\|H_\gamma\|_2}\eps^{\frac{1}{2(1+\beta)}}, 
\end{align}
Here, we used $\|H_\gamma\|_2\|\gamma^{*}(X)\|_2 \leq 1+\eps \leq 2$ to get the second inequality. 

Letting $\alpha = \frac{1}{2(1+\beta)}$, we obtain
\begin{align}
\left\|\gamma^{*}(X) - \frac{1}{\|H_\gamma\|_2}\right\|_\infty
&\leq \|\gamma^{*}(X) - \|\gamma^{*}(X)\|_2\|_\infty + \left\| \|\gamma^{*}(X)\|_2 - \frac{1}{\|H_\gamma\|_2}\right\|_\infty \\
&\leq CA^{\beta}\frac{\eps^\alpha}{\|H_\gamma\|_2} + \frac{\eps}{\|H_\gamma\|_2} \\
&\leq C(n, p, q, A, \gamma) \frac{\eps^\alpha}{\|H_\gamma\|_2}. 
\end{align}
\end{proof}

\begin{proof}[Proof of \eqref{eq:mcpinch}]
By the Hsiung--Minkowski formula \eqref{eq:HMformula} and the Fenchel inequality \eqref{eq:Fenchel}, we have
\begin{align}
1 \leq \frac{1}{\anF(\Sigma)}\intS |H_\gamma|\gamma^{*}(X)\gamma(N). 
\end{align}
Using this and \eqref{eq:pinching}, we have
\begin{align}
\left\| \frac{|H_\gamma|}{\|H_\gamma\|_2^2} - \gamma^{*}(X) \right\|_2^2
&= \|\gamma^{*}(X)\|_2^2 + \frac{1}{\|H_\gamma\|_2^2} - \frac{2}{\|H_\gamma\|_2^2} \frac{1}{\anF(\Sigma)}\intS |H_\gamma|\gamma^{*}(X)\gamma(N) \\
&\leq \|\gamma^{*}(X)\|_2^2 - \frac{1}{\|H_\gamma\|_2^2} \\
&\leq \left( 1-\frac{1}{(1+\eps}^2 \right) \|\gamma^{*}(X)\|_2^2 \\
&\leq 3\eps \|\gamma^{*}(X)\|_2^2, 
\end{align}
hence
\begin{align}
\|H_\gamma^2 - \|H_\gamma\|_2^2\|_1 &\leq \|H_\gamma^2 - \gamma^{*}(X)^2\|H_\gamma\|_2^4\|_1 + \|\gamma^{*}(X)^2\|H_\gamma\|_2^4 - \|H_\gamma\|_2^2\|_1 \\
&= \|H_\gamma\|_2^4\left(\left\| \frac{|H_\gamma|^2}{\|H_\gamma\|_2^2} - \gamma^{*}(X)^2 \right\|_1 +  \left\|  \gamma^{*}(X)^2 - \frac{1}{\|H_\gamma\|_2^2} \right\|_1 \right) \\
&\leq \|H_\gamma\|_2^4\left(\left\| \frac{|H_\gamma|}{\|H_\gamma\|_2} - \gamma^{*}(X) \right\|_2 \left\| \frac{|H_\gamma|}{\|H_\gamma\|_2} + \gamma^{*}(X) \right\|_2 +  CA^\beta \frac{\eps^\alpha}{\|H_\gamma\|_2}\left\| \gamma^{*}(X) + \frac{1}{\|H_\gamma\|_2} \right\|_1 \right)\\
&\leq \|H_\gamma\|_2^4 \left(\sqrt{3\eps}\|\gamma^{*}(X)\|_2 + CA^\beta \frac{\eps^\alpha}{\|H_\gamma\|_2} \right) \left(\|\gamma^{*}(X)\|_2 + \frac{1}{\|H_\gamma\|_2}\right) \\
&\leq C(n, p, q, A, \gamma) \eps^\alpha \|H_\gamma\|_2^2. 
\end{align}
Therefore, we obtain 
\begin{align}
\||H_\gamma| - \|H_\gamma\|_2\|_1\leq \frac{\||H_\gamma|^2 - \|H_\gamma\|_2^2\|_1}{\|H_\gamma\|_2} \leq C \eps^\alpha \|H_\gamma\|_2
\end{align}
Moreover, we have
\begin{align}
\||H_\gamma| - \|H_\gamma\|_2\|_p \leq 2\|H_\gamma\|_p \leq 2\|H_\gamma\|_2 \|\gamma^{*}(X)\|_2\|H_\gamma\|_p \leq 4 \|H_\gamma\|_2
\end{align}
by \eqref{eq:pinching}. Hence, for any $r \in [1, p)$, we obtain
\begin{align}
\||H_\gamma| - \|H_\gamma\|_2\|_r &\leq \||H_\gamma| - \|H_\gamma\|_2\|_1^{\frac{p-r}{r(p-1)}} \||H_\gamma| - \|H_\gamma\|_2\|_p^{\frac{p(r-1)}{r(p-1)}} \\
&\leq D(n, p, q, r, A, \gamma)\eps^{\frac{\alpha(p-r)}{r(p-1)}} \|H_\gamma\|_2. 
\end{align}
\end{proof}
%%%%%%%%%%%
\section{Proof of the Hausdorff closeness}
In this section, we prove the Hausdorff closeness in Theorem \ref{thm:mainthm}. To prove this, we need an anisotropic version of the lemma due to B. Colbois and J. -F. Grosjean \cite[Lemma 3.2]{CG}. 

\begin{lem}\label{lem:curvebound}
For any $R>0$ and $\rho \in (0, 1)$, there exists a positive $\eta=\eta(R, \rho, n, \gamma) >0$ satisfying the following property: Let $z_0 \in RW_\gamma$ and let $\nu_0$ be the outer unit normal vector to $RW_\gamma$ at $z_0$. Let $X: \Sigma \to \RR^{n+1}$ be a closed hypersurface isometrically immersed in $((R+\eta)W_\gamma \setminus (R-\eta)W_\gamma) \setminus B_{\rho}(z_0)$.  If there exists a point $p \in \Sigma$ with $\inn<X(p), \nu_0> >0$, then there exists a point $q \in \Sigma$ such that $|H_\gamma(q)| > \frac{\lambda}{2n\eta}$. 
\end{lem}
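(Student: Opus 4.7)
The plan is to apply a maximum principle to the linear height function in the direction $\nu_0$ at the highest point of $\Sigma$, and then to derive a sharp curvature bound there by an anisotropic tangent Wulff shape comparison, with the geometric hypotheses used to force the comparison Wulff shape to have radius of order $\eta$. First I would consider $f: \Sigma \to \RR$ defined by $f(x) = \inn<X(x), \nu_0>$ and let $q \in \Sigma$ be a maximizer; the existence of $p$ with $f(p) > 0$ ensures $f(q) > 0$. The first-order condition $\nablaS f(q) = 0$ forces $N(q) = \pm \nu_0$, and the Hessian identity
\begin{align*}
(\nablaS)^2 f(v, v) = -\inn<N(q), \nu_0>\inn<S(q) v, v>,
\end{align*}
combined with the second-order maximum condition, yields that $S(q)$ is semidefinite, all its eigenvalues sharing a common sign. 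Using the bound $A_\gamma \geq \lambda I$ from \eqref{eq:convex} then gives
\begin{align*}
|H_\gamma(q)| = \frac{1}{n}\,|\tr(A_\gamma S(q))| \geq \frac{\lambda}{n}\,|\tr(S(q))| \geq \frac{\lambda}{n}\,\lambda_{\max}(|S(q)|),
\end{align*}
so the lemma reduces to producing the sharp principal-curvature bound $\lambda_{\max}(|S(q)|) > 1/(2\eta)$.

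To obtain this bound, I would set up a tangent Wulff-shape comparison. Let $\mathcal{W}_s$ denote the Wulff shape of radius $s$ tangent to $\Sigma$ at $q$ on the convex side (the side opposite to $N(q)$): explicitly, $\mathcal{W}_s = sW_\gamma + X(q) - s\xi(\pm \nu_0)$ with the sign matching the orientation. If $\mathcal{W}_s$ fits inside $\Sigma$ near $q$---meeting $\Sigma$ only at $q$---and lies entirely in the allowed region $((R+\eta)W_\gamma \setminus (R-\eta)W_\gamma) \setminus B_\rho(z_0)$, then at $q$ the standard anisotropic second-order comparison forces each principal curvature of $\Sigma$ to dominate the corresponding one of $\mathcal{W}_s$; since the eigenvalues of the shape operator of $\mathcal{W}_s$ are at most $1/(s\lambda)$ in absolute value, this yields $\lambda_{\max}(|S(q)|) \geq 1/(s\lambda)$. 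The target is therefore to arrange the choice $s = 2\eta/\lambda$: under the hypotheses---the sign condition $f(q) > 0$ places $q$ near the top of the annulus in the $\nu_0$ direction, while the ball exclusion keeps $q$ at Euclidean distance $\geq \rho$ from $z_0$---a Wulff shape of Euclidean diameter of order $\eta$ tangent to $\Sigma$ at $q$ from the inside should fit in the thin annulus without touching $B_\rho(z_0)$ or $\Sigma$ elsewhere.

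The main obstacle is verifying this admissibility quantitatively, so that an explicit threshold $\eta = \eta(R, \rho, n, \gamma)$ emerges. I would do this by a case analysis on the position of $q$: if $q$ is tangentially far from $z_0$ in the annulus, a small tangent Wulff shape fits trivially between $\Sigma$ at $q$ and the inner boundary $(R-\eta)W_\gamma$; if $q$ is near $z_0$, the ball exclusion pushes $q$ onto the lateral ``rim'' $\partial B_\rho(z_0) \cap$ annulus, and the convexity of $W_\gamma$ together with the thinness of the annulus still leaves enough room. The explicit dependence on $R, \rho, n, \gamma$ emerges from translating Euclidean and $\gamma^{*}$-radii of $\mathcal{W}_s$ via the anisotropic constants ($\min \gamma$, $\max \gamma$, $\lambda$, and the geometry of $W_\gamma$), and the factor $1/(2n)$ in the conclusion is precisely the prefactor $\lambda/n$ picked up when passing from $|\tr(S(q))|$ to $\lambda_{\max}(|S(q)|)$ in the first display.
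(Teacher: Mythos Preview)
Your reduction step is fine: at the maximum $q$ of $f=\langle X,\nu_0\rangle$ one indeed gets $N(q)=\pm\nu_0$ and a semidefinite $S(q)$, so $|H_\gamma(q)|\ge\frac{\lambda}{n}\lambda_{\max}(|S(q)|)$. The gap is in the second step. For the tangent Wulff-shape comparison to yield $\kappa_i^{\Sigma}(q)\ge\kappa_i^{\mathcal W_s}(q)$ you need $\Sigma$ to lie locally on the \emph{correct side} of $\mathcal W_s$ at $q$, and nothing you have written forces this. The maximum condition tells you only that $\Sigma$ lies below the tangent hyperplane $\Pi=\nu_0^\perp+f(q)\nu_0$; a Wulff shape with top point $q$ also lies below $\Pi$, so both surfaces sit on the same side of $\Pi$ with no a~priori ordering between them. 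Your ``admissibility'' discussion checks that $\mathcal W_s$ fits into the annulus minus the ball, but $\Sigma$ also lies in that region, so this gives no one-sided contact either. In fact $\Sigma$ can be arbitrarily flat at $q$: nothing prevents $\Sigma$ from being a nearly planar cap there while still staying in the thin annulus, because the annulus is only thin in the $\gamma^*$-radial direction, which at a point with $|q-z_0|\ge\rho$ is \emph{transversal} to $\nu_0$ rather than aligned with it. So a barrier tangent at $q$ with normal $\nu_0$ simply does not see the thinness constraint.

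The paper circumvents this by not pinning down $q$ in advance. It builds, for each height $t\in[0,t_0(\nu_0)]$, a tube $S_{\eta,t}$ of radius $\eta$ around the convex cross-section $RW_\gamma\cap\Pi_t(\nu_0)$, computes directly that $H_\gamma^{S_{\eta,t}}\ge\frac{\lambda}{n\eta}-\frac{(n-1)H_0}{n(1-\eta\kappa_0)^2}\ge\frac{\lambda}{2n\eta}$ for $\eta$ small, and then \emph{slides} the tube in $t$ until a first contact with $\Sigma$ occurs (this is where the hypothesis $\langle X(p),\nu_0\rangle>0$ enters). First contact automatically gives the one-sided tangency you are missing, and the mean curvature comparison at that contact point yields $|H_\gamma(q)|\ge\frac{\lambda}{2n\eta}$. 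The essential difference is that the tube's normal coincides with the radial direction of the annulus, so its small tube-radius genuinely reflects the $\eta$-thinness; your Wulff shape at the height-maximum has normal $\nu_0$, which does not.
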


Before proving Lemma \ref{lem:curvebound}, we prepare some notations. For $\nu \in \SS^n$ and $t>0$, we set $\Pi_t(\nu) = \nu^{\perp}+t\nu$, where $\nu^{\perp}$ denote the $n$-dimensional subspace of $\RR^{n+1}$ which is  perpendicular to $\nu$. Let $z_0=R\xi(\nu) \in RW_\gamma$. 

We now let $t_0(\nu) = \max\{t>0 | \Pi_t(\nu) \cap (\partial B_{\rho}(z_0) \cap RW_\gamma) \neq \emptyset \}$. Note that for every $t \in [0, t_0(\nu)]$, the set $W(\nu, t) = RW_\gamma \cap \Pi_t(\nu)$ is convex. Considering $W(\nu, t)$ as a convex hypersurface in $\RR^n$, we let $S^{W(\nu, t)}$ and $\kappa_i^{W(\nu, t)}$ be the shape operator of $W(\nu, t)$ and the $i$-th principal curvature of $W(\nu, t)$ respectively. Set $H_\gamma^{W(\nu, t)} = \tr(A_\gamma S^{W(\nu, t})$. Since the function $h^{\nu}:[0, t_0(\nu)] \ni t \mapsto \max_{W(\nu, t)}H_\gamma^{W(\nu, t)} \in \RR_{>0}$ is non-decreasing we have $h^{\nu}(t_0(\nu)) = \max h^{\nu}$. We now define 
\begin{align}
H_0 = \max_{\nu \in \SS^n}h^{\nu}(t_0(\nu)) \in (0, \infty). 
\end{align}
Similarly, we set 
\begin{align}
\kappa_0 = \max_{\nu \in \SS^n} \max\{\kappa_i^{W(\nu, t)}(x) | t \in [0, t_0(\nu)], x \in W(\nu, t), 1 \leq i \leq n-1\}. 
\end{align}

\begin{proof}[Proof of Lemma \ref{lem:curvebound}]
For $\eta \in (0, \rho)$ and $t \in [0, t_0(\nu_0)]$, consider the family of smooth maps
\begin{align}
	\begin{array}{rccl}
	\Phi_{\eta, t} : &W(\nu_0, t) \times \SS^{1}	&\longrightarrow&	\RR^{n+1}\\
		&(x, \theta)		&\longmapsto&		x - \eta \cos{\theta} N(x) +\eta \sin{\theta}\nu_0 + t\nu_0, 
	\end{array} 
\end{align}
where $N(x)$ denotes the unit normal vector of $W(\nu_0, t) \subset \RR^{n}$ at $x$. Let $S_{\eta, t}$ denote the image of $\Phi_{\eta, t}$. 

We calculate the curvature of $S_{\eta, t}$. Let $\{e_i\}_{i=1}^{n-1}$ be the orthonormal frame which diagonalize $S^{W(\nu_0, t)}$ at $x$. We have
\begin{align}
\Phi_i &= d\Phi_{\eta, t}(e_i) = (1-\eta \cos{\theta} \kappa_i^{W(\nu_0, t)})e_i, \\
\Phi_\theta &= d\Phi_{\eta, t}(\partial_\theta) = \eta (\sin{\theta} N(x) + \cos{\theta}\nu_0). 
\end{align}
Note that there exists a posit
ive $\rho_0 \in (0, \rho)$ such that $S_{\eta, t}$ is an embedded hypersurface for $\eta \in (0, \rho_0)$. Since the outer unit normal vector field of $S_{\eta, t}$ is given by $\overline{N} = -\cos{\theta} N + \sin{\theta} \nu_0$, the anisotropic shape operator $S_\gamma^{S_{\eta, t}}$ of $S_{\eta, t}$ can be calculated by 
\begin{align}
S_\gamma^{S_{\eta, t}} e_i = -\cos{\theta} A_\gamma S^{W(\nu_0, t)} e_i, \quad S_\gamma^{S_{\eta, t}} \partial_\theta = -\eta^{-1}A_\gamma \Phi_\theta. 
\end{align}
Since $\inn<\Phi_i, \Phi_j> = \delta_{ij}$ and $\inn<\Phi_i, \Phi_\theta> = 0$, the anisotropic mean curvature $H_\gamma^{S_{\eta, t}}$ is given by 
\begin{align}
H_\gamma^{S_{\eta, t}} &= -\frac{1}{n}\tr{S_\gamma^{S_{\eta, t}}} \\
&= -\frac{1}{n} \frac{\inn<S_\gamma^{S_{\eta, t}} \partial_\theta, \partial_\theta>}{|\Phi_\theta|^2} -\frac{1}{n} \sum_{i=1}^{n-1}\frac{\inn<S_\gamma^{S_{\eta, t}} e_i, e_i>}{|\Phi_i|^2} \\
&= \frac{\inn<A_\gamma \Phi_\theta, \Phi_\theta>}{n\eps |\Phi_\theta|^2} -\frac{1}{n}\sum_{i}^{n-1}\frac{\cos{\theta}\inn<A_\gamma S^{W(\nu_0, t)} e_i, e_i>}{(1-\eta \cos{\theta} \kappa_i^{W(\nu_0, t)})^2}\\
&\geq  \frac{\lambda}{n\eta} -\frac{(n-1)H_0}{n(1-\eta \kappa_0)^2}
\end{align}

We now let $\eta = \min \left\{ \frac{1}{2\kappa_0}, \frac{\lambda}{8(n-1)H_0}, \rho_0\right\}$, then
\begin{align}
H_\gamma^{S_{\eta, t}} \geq \frac{\lambda}{2n\eta}
\end{align}

Since there exists a point $p \in \Sigma$ so that $\inn<X(p), \nu_0>$ by assumption, we can find $t \in [0, t_0(\nu_0)]$ and a point $q \in \Sigma$ which is a contact point with $S_{\eta, t}$. Therefore $|H_\gamma(q)| \geq \frac{1}{2n\eta}$. 
\end{proof}

\begin{proof}[Hausdorff closeness in Theorem \ref{thm:mainthm}]
Let $B_{\eps}(\|H_\gamma\|_2^{-1}W_\gamma)$ denote the $\eps$-neighborhood of $\|H_\gamma\|_2^{-1}W_\gamma$. Assume a positive $\eps_0>0$ is given. If $\Sigma$ satisfies \eqref{eq:pinching} for a small $\eta>0$, by \eqref{eq:radpinch}, it follows for a small $\eps \in (0, \eps_0)$ that $\Sigma \subset B_{\eps}(\|H_\gamma\|_2^{-1}W_\gamma) \subset B_{\eps}(\|H_\gamma\|_2^{-1}W_\gamma)$. 

Assume $\Sigma \subset B_{\eps}(\|H_\gamma\|_2^{-1}W_\gamma) \setminus B_{\eps_0}(y)$ occurs for some $y \in \|H_\gamma\|_2^{-1}W_\gamma$. Choosing $\eps$ so small that $\eps < \min\{\eta, n/\|H_\gamma\|_\infty \}$, where $\eta$ is a positive as in Lemma \ref{lem:curvebound}, it follows from Lemma \ref{lem:curvebound} that there exists a point $q \in \Sigma$ such that $H_\gamma(q) > \|H_\gamma\|_\infty/2$, which is a contradiction. 

Hence, we have $\DH(\Sigma, \|H_\gamma\|_2^{-1}W_\gamma) \leq \eps_0$. 
\end{proof}

\end{document}